\newcommand{\bburl}[1]{\textcolor{blue}{\url{#1}}}
\theoremstyle{plain}
\numberwithin{equation}{section}
\newtheorem{thm}{Theorem}[section]
\newtheorem{theorem}[thm]{Theorem}
\newtheorem{lemma}[thm]{Lemma}
\newtheorem{corollary}[thm]{Corollary}
\newtheorem{example}[thm]{Example}
\newtheorem{definition}[thm]{Definition}
\newtheorem{notation}[thm]{Notation}
\newtheorem{proposition}[thm]{Proposition}
\newtheorem{remark}[thm]{Remark}
\def\al{\alpha}
\def\be{\beta}
\def\ga{\gamma}
\def\ka{\kappa}
\def\pa{\partial}
\def\th{\theta}
\def\E{\mathbb{E}}
\def\N{\mathbb{N}}
\def\P{\mathbb{P}}
\def\R{\mathbb{R}}
\def\fa{\forall}
\def\til{\widetilde}
\newcommand\beq{\begin{equation}}
\newcommand\eeq{\end{equation}}
\newcommand\bea{\begin{eqnarray}}
\newcommand\eea{\end{eqnarray}}
\newcommand\bi{\begin{itemize}}
\newcommand\ei{\end{itemize}}
\newcommand\ben{\begin{enumerate}}
\newcommand\een{\end{enumerate}}
\begin{document}
\title{On The Enumeration and Asymptotic Analysis of Fibonacci Compositions}
\author{Joshua M. Siktar}
\date{\today}
\maketitle
\centerline{\bf Abstract}
    We study Fibonacci compositions, which are compositions of natural numbers that only use Fibonacci numbers, in two different contexts. We first prove inequalities comparing the number of Fibonacci compositions to regular compositions where summands have a maximum possible value. Then, we consider asymptotic properties of Fibonacci compositions, comparing them to compositions whose terms come from positive linear recurrence sequences. Finally, we consider analogues of these results where we do not allow the use of a certain number of consecutive Fibonacci numbers starting from $F_2 = 1$.
\pagestyle{myheadings}

\tableofcontents
\section{Introduction and basic definitions}\label{intro}
The goal of this paper is to unify and extend the theory on compositions of Fibonacci numbers, employing both the more general theory of compositions and the theory of positive linear recurrence sequences. To state our objectives more precisely, we must first define the notions of composition and partition. 
\begin{definition}[Compositions and Partitions]\label{compPartDef}
A \textit{composition} of an integer $n$ is an ordered representation of positive integers $\{n_1, n_2, \dots, n_k\}$ for which
\begin{equation}\label{compositionDef}
n \ = \ n_1 + n_2 + \dots + n_k.
\end{equation}
On the other hand, a \textit{partition} of $n$ is a collection of non-increasing positive integers $m_1 \geq m_2 \geq \dots \geq m_j$ for which
\begin{equation}\label{partitionDef}
n \ = \ m_1 + m_2 + \dots + m_j.
\end{equation}
\end{definition}
From these definitions one can easily observe that the number of partitions of any positive integer $n$ is in general less than the number of its compositions. Indeed, any partition of $n$ is a composition of $n$, and any reordering of its summands gives rise to another composition of $n$. Also, both compositions and partitions can repeat summands. While there are well-known asymptotic results on the enumeration of partitions (see \cite{andrews1976theory, bona2006walk, Rob, Ste}), in this paper we focus on compositions. In the literature, compositions are studied in two settings that are clearly equivalent: the direct enumeration of solutions to \eqref{compositionDef}, and the placement of balls into bins that are arranged in a line. Both enumerative and asymptotic problems in this area have been studied extensively; see \cite{Beck, bona2006walk, Bond, Fla, Gra, Mos, Mpg, Mur, Ruz93, Ruz95, Sedi} and the references therein. 

\begin{definition}[Fibonacci Numbers]\label{FibDef}
We denote the Fibonacci numbers as the sequence where $F_0 = 0$, $F_1 = 1$, and 
\begin{equation}\label{FibRecurDef}
F_k \ = \ F_{k - 1} + F_{k - 2}
\end{equation}
for all $k \geq 2$. We also define a Fibonacci composition as a composition of a natural number obtained using only Fibonacci numbers as summands.
\end{definition}

We also rely on Zeckendorf decompositions to motivate the questions we study, so we define those next.

\begin{definition}[Zeckendorf decomposition]\label{ZeckDecompDef}
A \textit{Zeckendorf decomposition} is a partition of a natural number into a sum of positive, non-adjacent Fibonacci numbers, where the Fibonacci numbers are taken as in Definition \ref{FibDef}.
\end{definition}

It is well known that every positive integer has a unique Zeckendorf decomposition; see \cite{Ze} for a standard proof via induction and a greedy algorithm. The combinatorics and limiting behavior of Zeckendorf decompositions and related problems on Fibonacci numbers are studied across a growing body of papers, including \cite{Bec, Cat, Che, Chu, Fan, Fat, Fil, Gue, Heb, How, Kol, LiM, MilWan1}.

With that in mind, we turn our attention to the more scarcely studied compositions of Fibonacci numbers. Naturally, a \textit{Fibonacci composition} is a composition of a natural number using only Fibonacci numbers. One of the challenges of studying Fibonacci compositions is the difficulty in finding closed formulas for their number. This is very different from compositions that can use any of the natural numbers, or even compositions with lower and upper limits on what summands can be used. The recent work \cite{Bond} explores the enumerative aspects of this problem in great detail. Since we can not expect to replicate the same type of analysis with Fibonacci compositions\footnote{The OEIS entry \cite{Slo} gives some information on summation representations for Fibonacci compositions, however.}, we instead turn our attention to asymptotic results. As a starting point, the paper \cite{Kno} gives a generating function approach to finding the number of ways to split $n$ balls into bins so each bin contains a Fibonacci number of balls. Then, the paper \cite{Sedi} performs a generalization of this, for sets $S \subset \N^+$ beyond the Fibonacci numbers, by introducing the notion of an interpreter. Neither paper gives any attention to minimum capacity problems, where we require each summand to be sufficiently large, so we use this paper as an opportunity to generalize existing results in that direction.

One of the other major objects studied in this paper is the positive linear recurrence sequence, which we proceed to define carefully.
\begin{definition}[PLRS]\label{PLRSDef} A \textit{positive linear recurrence sequence} (PLRS) of order $L$ is a sequence of positive integers $\{H_j\}^{\infty}_{j = 1}$ with the following properties:
\begin{enumerate}
\item There are non-negative integers $L, c_1, \dots, c_L$ such that 
\begin{equation}\label{PLRSDefEq1}
H_{i + 1} \ = \ c_1H_i + c_2H_{i - 1} + \dots + c_LH_{i + 1 - L} 
\end{equation}
for all $i \geq L,$ where $L, c_1, c_L > 0$.
\item We have $H_1 = 1$, and the following decomposition holds for $1 \leq i < L$:
\begin{equation}\label{PLRSDefEq2}
   H_{i + 1} \ = \ c_1H_i + c_2H_{i - 1} + \dots + c_iH_1 + 1.
\end{equation}
\end{enumerate}
\end{definition}
These sequences are the subject of study of many recent papers in number theory; see for instance \cite{Bol, Gil, LiM, MilWan1, MilWan2}. Of course, the Fibonacci numbers are a PLRS with order $2$, where $c_1 = c_2 = 1$. 

Now we briefly outline the contents of the remainder of the paper. First, Section \ref{combiFib} provides estimates comparing the number of Fibonacci compositions to the number of standard compositions, including the case where the first $m - 1$ summands can not be used in the Fibonacci compositions. Beyond that section, the brief Section \ref{(0, 1)} provides the necessary background and results on the theory of power series. Section \ref{asymp} provides asymptotic results for increasing sequences of positive numbers, and Section \ref{PLRS} compares asymptotics of Fibonacci compositions to compositions involving summands from an arbitrary PLRS. Finally, Section \ref{end} discusses a few possible directions for future projects.


\section{Combinatorics of Fibonacci Compositions}\label{combiFib}

The combinatorial analysis in this section will be split into two parts. First we consider the special case where all Fibonacci numbers can be used in composition; the second subsection discusses the more general case where we exclude the first $m - 1$ Fibonacci numbers from being used. The estimates in this section are not necessarily the tightest possible, but seeking further refinements using the same techniques does not add much insight. To perform this analysis, we introduce some notation for this section.

\begin{notation}[Composition variables]\label{CompositionVAr}
For $m, n \in \N^+$, we denote: 
\begin{enumerate}
    \item $X_n$ as the random variable for the number of summands in a random Fibonacci composition of $n$;
    \item $X_{n, m}$ as the random variable for the number of summands in a random Fibonacci composition that does not use any summands in the list $\{F_1, F_2, \dots, F_{m - 1}\}$;
    \item $S_n$ as the total number of summands amongst all Fibonacci compositions of $n$;
    \item $\iota(n)$ as the index of the smallest Fibonacci number not less than $n$, i.e., $\iota(n) := \min\{k \in \N^+, F_k \geq n\}$.
\end{enumerate}
\end{notation}


\subsection{Compositions without missing summands}\label{nomisssummands}
We first define the sets that will be analyzed in this subsection .
\begin{notation}\label{G_nU_n}
For $n \in \N^+$, we denote
\begin{equation}\label{G_nDef}
    \overline{G}_n \ := \ \Big\{\{a_i\}^{j}_{i = 1} \subset \N^+, \sum^{j}_{i = 2}a_i = n, a_i = F_i \ \text{for some} \ i\Big\}
\end{equation} 
as the set of Fibonacci compositions of $n$, and 
\begin{equation}\label{I_nDef}
    \overline{I}_{n, [k]} \ := \ \Big\{\{a_i\}^{j}_{i = 1} \subset \N^+, \sum^{j}_{i = 1}a_i = n, a_i \leq k\Big\}
\end{equation}
as the set of compositions of $n$ where each summand is at most $k$. Naturally we also denote $G_n := |\overline{G}_n|$ and $I_{n, [k]} := |\overline{I}_{n, [k]}|$
\end{notation}
Later in this section we will bound the size of a collection of Fibonacci compositions from above by the size of a collection of compositions where each summand has a maximum possible value. While it is trivial that $G_n \leq I_{n, [n]}$, we use this as inspiration to find a tighter bound of the same nature. 

\begin{theorem}\label{FibVsMaxCapac}
For $n \geq 6$, we have the inequality
\begin{equation}\label{FibVsMaxCapacIneq}
    G_n \ \leq \ I_{n, [\iota(n) - 1]}.
\end{equation}
\end{theorem}

\begin{proof}
The proof will use, in a non-trivial way, that $\iota(n) \geq 6$ if and only if $n \geq 6$. Our strategy is to construct a bijection between $\overline{G}_n$ and a subset of $\overline{I}_{n, [\iota(n) - 1]}$, which will verify \eqref{FibVsMaxCapacIneq}. We define the set 
\begin{equation}\label{G1Def}
\mathcal{G}_1 \ := \ \{\{a_i\}^{j}_{i = 1} \in \overline{G}_n, a_i \leq \iota(n) - 1 \ \fa i\},
\end{equation}and define 
\begin{equation}\label{G2Def}
    \mathcal{G}_2 \ := \ \overline{G}_n \setminus \mathcal{G}_1.
\end{equation} Precisely, the set $\mathcal{G}_2$ is the collection of all Fibonacci compositions of $n$ using at least one Fibonacci number taking value at least $\iota(n)$. With that in mind, we define our map $T$ on $\overline{G}_n$ as follows:
\begin{enumerate}
    \item If $\ell \in \mathcal{G}_1$, then $T(\ell) := \ell$.
    \item If $\ell \in \mathcal{G}_2$, then in place of every summand $a_i \geq 8$, insert the summands $\{4, 1, 1, \dots, 1, 4\}$, where exactly $a_i - 8$ consecutive $1$s are used. 
\end{enumerate}
Notice that $\mathcal{G}_1 \subseteq \overline{I}_{n, [\iota(n) - 1]}$, and for compositions belonging to $\mathcal{G}_2$, we have taken every Fibonacci summand with value at least $8$, and replaced it by a collection of $4$s and $1$s. Since $n \geq 6$, necessarily $\iota(n) \geq 6$, so $4$ is a legal summand for compositions in $\overline{I}_{n, [\iota(n) - 1]}$, and we conclude that $T(\overline{G}_n) \subset \overline{I}_{n, [\iota(n) - 1]}$.

Now we define the map $T^{-1}$ on the set of compositions $T(\overline{G}_n)$ as follows:
\begin{enumerate}
    \item If $\ka \in \mathcal{G}_1$ then define $T^{-1}(\ka) := \ka$.
    \item If $\ka \in T(\overline{G}_n)\setminus\mathcal{G}_1$, then read the composition from left to right. Between the first $4$ and the second $4$, add those $4$s, and all summands in between them; replace all of those summands by their sum. Repeat this process until there are no more $4$s in the composition.
\end{enumerate}
Due to the construction of the set $T(\overline{G}_n)$, any composition in the set will contain an even number of $4s$, so the procedure we describe is well-defined. In particular, since compositions in $\overline{G}_n$ only contain Fibonacci numbers, the only $4$s that appear are from splitting a larger summand into smaller ones.

Finally, to show that $T$ is a bijection, the only non-trivial step is demonstrating that $T^{-1}(T(\ell)) = \ell$ whenever $\ell \in \mathcal{G}_2$. In applying $T$ to $\ell$, we look for summands that are at least $8$, and replace each of them with a $4$, then some number of $1$s, and then another $4$. All other summands are left unchanged (including $5$s, since $5 < \iota(n)$). Reversing this process is straightforward, as we just replace the aforementioned string of $4$s and $1$s by their sum. The proof is complete.
\end{proof}

\begin{remark}\label{FibIntIneqRmk}
We chose the parameters for this inequality very carefully; $4$ is the smallest natural number that is not a Fibonacci number, and $5$ is the only Fibonacci number larger than $4$ that can not be written as a sum of two $4$s and some [possibly empty] set of other natural numbers; thus we insisted $\iota(n)$ be large enough to ensure that $5$ was a legal summand.
\end{remark}

In addition to this analysis, we can quickly calculate a summation formula for $I_{n, [\iota(n) - 1]}$ using Theorem 8 from \cite{Bond}. Keeping the notation from that paper, let $R_{n, \ell, k}$ denote the number of compositions of $n$ with $\ell$ summands, each summand taking value at most $k$. Then we have the formula
\begin{equation}\label{BallsInBinsEq1}
    R_{n, \ell, k} \ = \ \sum^{\ell}_{t = 0}(-1)^t{{\ell}\choose{t}}{{n - t(k + 1) + \ell - 1}\choose{\ell - 1}}.
\end{equation}
We may set $k := \iota(n) - 1$, and then the inequality $\Bigl\lfloor{\frac{n}{\iota(n) - 1}\Bigr\rfloor} \leq \ell \leq n$ indicates how many summands we can have. Summing \eqref{BallsInBinsEq1} over all suitable values of $\ell$, we conclude that
\begin{equation}\label{BallsInBinsEq2}
    I_{n, [\iota(n) - 1]} \ = \ \sum^{n}_{\ell = \lfloor{\frac{n}{\iota(n) - 1}\rfloor}}\sum^{\ell}_{t = 0}(-1)^t{{\ell}\choose{t}}{{n - t(k + 1) + \ell - 1}\choose{\ell - 1}}.
\end{equation}
We can use this enumeration to analyze events whose underlying probability space is all Fibonacci compositions of $n$, and then we can compute a lower bound on the expected value of this random variable.
\begin{theorem}\label{EXnLB}
For $n \geq 6$, we have the lower bound
\begin{equation}\label{EXnLBIneq}
    \E[X_n] \ \geq \ \frac{nF_n}{2I_{n, [\iota(n) - 1]}}.
\end{equation}
\end{theorem}

\begin{proof}
Since $X_n$ is a non-negative random variable, we may apply Markov's inequality (see for instance \cite[Theorem 18.21]{Juk}) to obtain the inequality
\begin{equation}\label{EXnLBIneq1}
    \frac{2}{n}\E[X_n] \ \geq \ \P\left[X_n \geq \frac{n}{2}\right].
\end{equation}
Moreover, we observe that if a given Fibonacci composition of $n$ only has $1$s and $2$s as summands, then it has at least $\frac{n}{2}$ summands, but the converse is not necessarily true. As an example, a Fibonacci composition of $8$ is $8 = 3 + 1 + 1 + 1 + 1 + 1$, which has more than $4$ summands but includes a $3$ as one of those summands.

It follows that
\begin{equation}\label{EXnLBIneq2}
\P\left[X_n \geq \frac{n}{2}\right] \ \geq \ \P[Z_{[1, 2]}],
\end{equation}
where $Z_{[1, 2]}$ is the event where a Fibonacci composition only has $1$s and $2$s as summands. It is well-known that the number of ways to compose $n$ using only $1$s and $2$s is equal to $F_n$, so
\begin{equation}\label{EXnLBIneq3}
    \P\left[X_n \geq \frac{n}{2}\right] \ \geq \ \frac{F_n}{G_n},
\end{equation}
and combining \eqref{EXnLBIneq1} and \eqref{EXnLBIneq3} gives us
\begin{equation}\label{EXnLBIneq4}
    \E[X_n] \ \geq \ \frac{nF_n}{2G_n}.
\end{equation}
Finally, thanks to the inequality \eqref{FibVsMaxCapacIneq} we find the lower bound \eqref{EXnLBIneq}.
\end{proof}

By taking an alternative route starting from \eqref{EXnLBIneq4}, we get another inequality of interest in its own right.
\begin{corollary}\label{EXnLBCor}
For all $n \geq 6$, we have
\begin{equation}\label{EXnLBCorIneq}
    S_n \ \geq \ \frac{nF_n}{2}.
\end{equation}
\end{corollary}

\begin{proof}
Write $\E[X_n]$ as $\frac{S_n}{G_n}$. We may interpret $\E[X_n]$ in this way because we are determining the average number of summands in a Fibonacci composition of $n$.
\end{proof}


\subsection{Compositions with missing summands}\label{missingsumandsbounds}

Now we seek results analogous to those in the previous subsection, except now we do not allow the use of the first $m - 1$ Fibonacci numbers as summands. We need to ensure that such Fibonacci compositions can even exist for all numbers sufficiently large, no matter how many of the first Fibonacci numbers we do not allow. It turns out we can even find an exact threshold beyond which this behavior always takes place.

\begin{lemma}\label{cyclinglemma}
For all $m \in \N^+$, there exists $N(m) \in \N_0$ such that for all $n \geq N$, there exists a sequence of coefficients $\{\be_j\}^{\infty}_{j = m} \subset \N_0$ such that $n = \sum^{\infty}_{j = m}\be_jF_j$. Furthermore, we can choose $N(m) := F_mF_{m + 1}$.
\end{lemma}

\begin{proof}
Considering Definition \ref{FibRecurDef} of the Fibonacci numbers, we will show that for all $n \geq F_mF_{m + 1}$, there exist $\th_m, \th_{m + 1} \in \N_0$ such that $n = \th_mF_m + \th_{m + 1}F_{m + 1}$. 

Clearly, if $n = N(m) = F_mF_{m + 1}$, we can set $\th_m := F_{m + 1}$ and $\th_{m + 1} := 0$. From here, we will proceed via induction, using $n = N(m)$ as the base case. Suppose a suitable composition holds for some $n \geq N(m)$, and we want to find one for $n + 1$. The strategy will be to add $1$ to the existing composition in such a way that the coefficients do not become negative. Since any two consecutive Fibonacci numbers are relatively prime, we may write the integer $1$ as a linear combination of any two consecutive Fibonacci numbers. In fact, there exist coefficients $\eta_m, \eta_{m + 1}, \tau_m, \tau_{m + 1}$ such that
\begin{equation}\label{cyclinglemmaEq1}
    \begin{aligned}
    \eta_mF_m + \eta_{m + 1}F_{m + 1} \ &= \ 1 \\
    \tau_mF_m + \tau_{m + 1}F_{m + 1} \ &= \ 1,
    \end{aligned}
\end{equation}
where without loss of generality, we assume $\eta_m \geq 0$, $\eta_{m + 1} \leq 0$, $\tau_m \leq 0$, and $\tau_{m + 1} \geq 0$.

We will want to add one of these equations to the existing composition $n = \th_mF_m + \th_{m + 1}F_{m + 1}$; which one we choose depends on the values of the coefficients. The following two Fibonacci identities, known as the \textit{Cassini identities}, are well-known for $m \in \N^+$ and can be proven by induction:
\begin{equation}\label{FibIDcycling1}
    F_m^2 - F_{m - 1}F_{m + 1} \ = \ (-1)^{m + 1};
\end{equation}
\begin{equation}\label{FibIDcycling2}
    F_{m - 1}F_m - F_{m - 2}F_{m + 1} \ = \ (-1)^m.
\end{equation}
These identities tell us if $m$ is even, we can set $\eta_m := -F_m$, $\eta_{m + 1} := F_{m - 1}$, $\tau_m := F_{m - 1}$, and $\tau_{m + 1} := -F_{m - 2}$; likewise, if $m$ is odd, then we can set $\eta_m := F_m$, $\eta_{m + 1} := -F_{m - 1}$, $\tau_m := -F_{m - 1}$, and $\tau_{m + 1} := F_{m - 2}$. Then, if either of the following pairs of statements is true, we have found a valid composition for $n + 1$:
\begin{enumerate}
    \item $\th_m + \eta_m \geq 0$ and $\th_{m + 1} + \eta_{m + 1} \geq 0$;
    \item $\th_m + \tau_m \geq 0$ and $\th_{m + 1} + \tau_{m + 1} \geq 0$.
\end{enumerate}
Assume first that $m$ is even, and then $\th_{m + 1} + \eta_{m + 1} \geq 0$ and $\th_m + \tau_m \geq 0$ are both immediate. We assume for sake of contradiction that $\th_{m + 1} < -\tau_{m + 1}$ and that $\th_m < -\eta_m$. In this case, we may use the specific values of $\eta_{m + 1}$ and $\tau_m$ to obtain the inequality chain
\begin{equation}\label{cyclinglemmaEq2}
    N \ \leq \ \th_mF_m + \th_{m + 1}F_{m + 1} \ < \ -\eta_mF_m - \tau_{m + 1}F_{m + 1} \ = \ F_m^2 + F_{m - 2}F_{m + 1} \ = \ F_mF_{m + 1} - 1 \ < \ N,
\end{equation}
which is a contradiction. The case where $m$ is odd is similar.
\end{proof}

\begin{remark}\label{countingTwoSummandComps}
From this construction it is easy to see how many such compositions are possible using just $F_m$ and $F_{m + 1}$ when $n \geq F_mF_{m + 1}$. If $n = \ell F_mF_{m + 1}$ for some $\ell \in \N^+$, then we can construct a composition of $n$ using just $F_m$ and $F_{m + 1}$ as summands in $\ell + 1$ ways. If $\ell F_mF_{m + 1} < n < (\ell + 1)F_mF_{m + 1}$, then we can write such a composition in exactly $\ell$ ways. 
\end{remark}

Now, we provide a [more delicate] variant of Theorem \ref{FibVsMaxCapac} that holds when we consider Fibonacci compositions without the summands $\{F_1, F_2, \dots, F_{m - 1}\}$; the challenge is identifying what number can fill the role of the $4$s in the proof of Theorem \ref{FibVsMaxCapac}. The notation used is as follows: 
\begin{enumerate}
    \item $G_{n\setminus\{F_1, \dots, F_{m - 1}\}}$ refers to the number of Fibonacci compositions of $n$ that do not use $\{F_1, \dots, F_{m - 1}\}$ as summands;
    \item $I_{n, \Big\{m, m + 1, \dots, \left\lfloor\frac{5n}{8}\right\rfloor\Big\}}$ refers to the number of compositions of $n$ that only use $\Big\{m, m + 1, \dots, \left\lfloor\frac{5n}{8}\right\rfloor\Big\}$ as possible summands.
\end{enumerate}

\begin{theorem}\label{missingSummandsGToI} We have the inequality
\begin{equation}\label{missingSummandsGToIIneq}
    G_{n\setminus\{F_1, \dots, F_{m - 1}\}} \ \leq \ I_{n, \Big\{m, m + 1, \dots, \left\lfloor\frac{5n}{8}\right\rfloor\Big\}}
\end{equation}
whenever $m \geq 4$, and either of the following sets of conditions are met:
\begin{enumerate}
    \item $m$ is not a Fibonacci number and $\frac{5}{8}n \geq 3m + 1$;
    \item $m + 1$ is not a Fibonacci number and $\frac{5}{8}n \geq 3m + 3$.
\end{enumerate}
\end{theorem}

\begin{proof}
Allowing summands in our compositions up to $\frac{5}{8}n$ ensures that for any $m \geq 4$, we have $\frac{F_{m + 1}}{F_m} \geq \frac{8}{5}$. Moreover, between $\frac{5}{8}n$ and $n$ there is at most one Fibonacci number, and we denote it as $F_k$. Furthermore, since $F_k > \frac{n}{2}$, it can only appear in a composition of $n$ once. This simplifies the forthcoming analysis.

Denote $\overline{G}_{n\setminus\{F_1, \dots, F_{m - 1}\}}$ as the set of Fibonacci compositions without the summands $\{F_1, F_2, \dots, F_{m - 1}\}$, and $\overline{I}_{n, \Big\{m, m + 1, \dots, \left\lfloor\frac{5n}{8}\right\rfloor\Big\}}$ as the set of compositions of $n$ that only use $\Big\{m, m + 1, \dots, \left\lfloor\frac{5n}{8}\right\rfloor\Big\}$ as possible summands. Notice that as long as $n \geq F_mF_{m + 1}$, Lemma \ref{cyclinglemma} assures us that the set $\overline{G}_{n\setminus\{F_1, \dots, F_{m - 1}\}}$ is nonempty; if this set is empty, then the theorem trivially holds.

Just as in the proof of Theorem \ref{FibVsMaxCapac}, we will split the set $\overline{G}_{n\setminus\{F_1, \dots, F_{m - 1}\}}$ into two sub-collections. The first of these is 
\begin{equation}\label{mathcalG1Def}
\mathcal{G}_1 \ := \ \Bigg\{\{a_i\}^{\ell}_{i = 1} \in \overline{G}_{n\setminus\{F_1, \dots, F_{m - 1}\}}, a_i \leq \left\lfloor\frac{5n}{8}\right\rfloor\Bigg\}.
\end{equation}
To accompany \eqref{mathcalG1Def}, we also define
\begin{equation}\label{mathcalG2Def}\mathcal{G}_2 \ := \ \overline{G}_{n\setminus\{F_1, \dots, F_{m - 1}\}}\setminus\mathcal{G}_1.
\end{equation}
Precisely, the set $\mathcal{G}_2$ is the collection of all Fibonacci compositions of $n$ using $F_k$ as a summand, without using any of $\{F_1, \dots, F_{m - 1}\}$ as summands. With that in mind, we define our map $T$ on $\overline{G}_{n\setminus\{F_1, \dots, F_{m - 1}\}}$ as follows:
\begin{enumerate}
    \item If $\ell \in \mathcal{G}_1$, then define $T(\ell) := \ell$.
    \item If $\ell \in \mathcal{G}_2$ and $m$ is not a Fibonacci number, then in place of $F_k$, insert the summands $\{m, m + 1, m + 1, \dots, m + 1, r, m\}$. We use $j_1$ copies of $m + 1$, with $j_1$ solving the equation $F_k - 2m = j_1(m + 1) + r_1$, where $r_1 \in \{m + 2, m + 3, \dots, 2m + 1\}$.
    \item If $\ell \in \mathcal{G}_2$ and $m$ is a Fibonacci number, then in place of $F_k$, insert the summands $\{m + 1, m + 2, m + 2, \dots, m + 2, r, m + 1\}$. We use $j_2$ copies of $m + 2$, with $j_2$ solving the equation $F_k - 2(m + 1) = j_2(m + 2) + r_2$, where $r_2 \in \{m + 3, m + 4, \dots, 2m + 3\}$
\end{enumerate}
Some remarks are needed here. First notice that since $m \geq 4$, $m$ and $m + 1$ can not both be Fibonacci numbers. Moreover, the bounds relating $n$ and $m$ assure the existence solutions $(j_1, r_1)$ and $(j_2, r_2)$ to the equations $F_k - 2m = j_1(m + 1) + r_1$ (if $m$ is not a Fibonacci number) and $F_k - 2(m + 1) = j_2(m + 2) + r_2$ (if $m$ is a Fibonacci number) respectively. Indeed, in the case where $m$ is not a Fibonacci number, the bounds $\frac{5}{8}n \geq 3m + 1$ and $F_k > \frac{5}{8}n$ tell us we may write $F_k$ as a sum of two $m$s, some $j_1$ copies of $m + 1$, and a ``leftover" quantity $r_1$ that is definitely not equal to $1$, $2$, $3$, or $5$. The case where $m$ is a Fibonacci number is similar.
Finally, it is clear from the definition of $T$ that 
\begin{equation}\label{TGnInclusion}
    T(\overline{G}_{n\setminus\{F_1, \dots, F_{m - 1}\}}) \ \subset \ \overline{I}_{n, \{m, m + 1, \dots, \lfloor{\frac{5n}{8}\rfloor}\}}.
\end{equation} 

Now we can define the map $T^{-1}$ on the set of compositions $T(\overline{G}_{n\setminus\{F_1, \dots, F_{m - 1}\}})$ as follows:
\begin{enumerate}
    \item If $\ka \in \mathcal{G}_1$ then define $T^{-1}(\ka) := \ka$.
    \item If $\ka \in T(\overline{G}_{n\setminus\{F_1, \dots, F_{m - 1}\}})$ and $m$ is not a Fibonacci number, then read the composition from left to right. Add all the terms between two consecutive appearances of $m$, including the two $m$s in the sum. Repeat this procedure until there are no more summands equal to $m$ in the composition.
    \item If $\ka \in T(\overline{G}_{n\setminus\{F_1, \dots, F_{m - 1}\}})$ and $m$ is a Fibonacci number, then do the same steps as in the previous bullet point, but looking for $m + 1$ instead of $m$.
\end{enumerate}
Due to the construction of the set $T(\overline{G}_{n\setminus\{F_1, \dots, F_{m - 1}\}})$, any composition in that set will contain an even number of $m$s (or $m + 1$s, when $m$ is a Fibonacci number), so the procedure we described is well-defined. The fact that $T$ is a bijection between $\overline{G}_{n\setminus\{F_1, \dots, F_{m - 1}\}}$ and $T(\overline{G}_{n\setminus\{F_1, \dots, F_{m - 1}\}})$ follows from the same reasoning used in the proof of Theorem \ref{FibVsMaxCapac}.
\end{proof}

\begin{remark}\label{FibVsMaxCapacRmk1}
These bounds are far from being tight. Still, we can control the number of Fibonacci compositions from above by some number of standard compositions, which are a better studied object, even when we do not allow the use of the first few Fibonacci numbers. This bound could be refined by removing or adjusting some of the simplifying assumptions, but it is not worth complicating the main ideas of the proof.
\end{remark}

Now we provide an analogue of Theorem \ref{EXnLB}, where we exclude $\{F_1, F_2, \dots, F_{m - 1}\}$ as possible summands. 

\begin{theorem}\label{EXnmLB}
We have the lower bounds
\begin{equation}\label{EXnmLBIneqA}
    \E[X_{n, m}] \ \geq \ \frac{n}{F_{m + 1}}\left\lfloor\frac{n}{F_mF_{m + 1}}\right\rfloor\frac{1}{G_{n, \setminus\{F_1, \dots, F_{m - 1}\}}}
\end{equation}
and
\begin{equation}\label{EXnmLBIneqB}
    \E[X_{n, m}] \ \geq \ \frac{n}{F_{m + 1}}\left\lfloor\frac{n}{F_mF_{m + 1}}\right\rfloor\frac{1}{I_{n, \{m, m + 1, \dots, \left\lfloor\frac{5n}{8}\right\rfloor\}}}
\end{equation}
whenever $m \geq 2$ and $n \geq F_mF_{m + 1}$.
\end{theorem}

\begin{proof}
Since $X_{n, m}$ is a non-negative random variable, we may apply Markov's Inequality to obtain the inequality
\begin{equation}\label{EXnmLBIneq1}
    \frac{F_{m + 1}}{n}\E[X_{n, m}] \ \geq \ \P\left[X_{n, m} \geq \frac{n}{F_{m + 1}}\right].
\end{equation}
If a given Fibonacci composition of $n$ only has $F_m$ and $F_{m + 1}$ as summands (which is guaranteed to exist when $n \geq F_m F_{m + 1}$ by Lemma \ref{cyclinglemma}), then it has at least $\frac{n}{F_{m + 1}}$ summands, but the converse is not necessarily true. It follows that
\begin{equation}\label{EXnmLBIneq2}
    \P\left[X_{n, m} \geq \frac{n}{F_{m + 1}}\right] \ \geq \ \P[Z_{n, [F_m, F_{m + 1}]}],
\end{equation}
where $Z_{n, [F_m, F_{m + 1}]}$ denotes the event where a Fibonacci composition of $n$ only has $F_m$ and $F_{m + 1}$ as summands. Due to Remark \ref{countingTwoSummandComps}, we have that
\begin{equation}\label{EXnmLBIneq3}
    \P[Z_{n, [F_m, F_{m + 1}]}] \ \geq \left\lfloor\frac{n}{F_mF_{m + 1}}\right\rfloor\frac{1}{G_{n, \setminus\{F_1, \dots, F_{m - 1}\}}}.
\end{equation}
Combining the inequalities \eqref{EXnmLBIneq1}, \eqref{EXnmLBIneq2}, and \eqref{EXnmLBIneq3} immediately yields \eqref{EXnmLBIneqA}. We may also utilize \eqref{missingSummandsGToIIneq} to deduce \eqref{EXnmLBIneqB}.
\end{proof}

\section{Asymptotic Analysis of Compositions}\label{asymp} 

We now complement our estimates pertaining to Fibonacci compositions of a fixed $n$ with an asymptotic analysis of these compositions as $n \rightarrow \infty$. Central to this asymptotic theory is the behavior of certain power series on the interval $(0, 1)$, so our first subsection reviews this theory, while the second introduces and proves some of our main asymptotic results.

\subsection{Power Series on $(0, 1)$}\label{(0, 1)} 

We begin this subsection by introducing notation for our power series.

\begin{notation}\label{BjSum}
Let $\{B_j\}^{\infty}_{j = 1}$ be a strictly increasing sequence of positive integers, and for each $m \in \N^+$, we define the power series 
\begin{equation}\label{THMGF} 
T_{B_m}(x) \ := \ \sum^{\infty}_{i = m}x^{B_i}.
\end{equation}
We also let $t_{B_m}(n)$ denote the number of compositions of $n$ using summands from $\{B_j\}^{\infty}_{j = 1}$, excluding the numbers $\{B_1, B_2, \dots, B_{m - 1}\}$. 
\end{notation}

Notice that we need to assume $\{B_j\}^{\infty}_{j = 1}$ is strictly increasing to assure that no powers are repeated in the summation; moreover, we indicate that the sum represents the generating function for choosing summands for a composition from a given increasing sequence of positive integers. It is then easily seen that the power series \eqref{THMGF} has a radius of convergence at least $1$ by comparison to the standard geometric series; we will henceforth restrict such power series to the interval $(0, 1)$, and consider real-valued roots of the power series on that interval.

We can freely take derivatives of $T_{B_m}$ in the interval $(0, 1)$, via term-by-term differentiation. Seeing that $T_{B_m}'(x) > 0$ for all $x \in (0, 1)$, and $T_{B_m}(0) = -1$, each equation of the form
\begin{equation}\label{BHMGFA}
    T_{B_m}(x) \ = \ 1
\end{equation}
has a unique root (where we've also used the blowup of $T_{B_m}$ at $x = 1$). Denote this sequence of roots as $\{\ga_{B_m}\}^{\infty}_{m = 1}$. We may define \eqref{THMGF} for any increasing sequence of natural numbers, including the Fibonacci numbers, though we will replace $T_{B_m}$ with $R_m$ when referring to this special case. 
\begin{notation}\label{FjSum}
We define the power series
\begin{equation}\label{BHMGF}
    R_m(x) \ := \ \sum^{\infty}_{i = m}x^{F_i},
\end{equation}
and let $r_m(n)$ denote the number of Fibonacci compositions of $n$ that do not use any of the summands $\{F_1, F_2, \dots, F_{m - 1}\}$. Each equation of the form 
\begin{equation}\label{BHMGFR}
    R_m(x) \ = \ 1
\end{equation}
will have a unique root, and we denote this sequence of roots as $\{\al_m\}^{\infty}_{m = 1}$.
\end{notation}

With these constructions in place, we have the following proposition, which may be regarded as a routine exercise in real analysis. For sake of completeness we provide a short proof.

\begin{proposition}\label{etaConv}
The sequence $\{\ga_{B_m}\}^{\infty}_{m = 1}$ converges to $1$ as $m \rightarrow \infty$.
\end{proposition}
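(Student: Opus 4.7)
The plan is to show the sequence $\{(\gamma_H)_m\}$ is strictly increasing and then preclude any limit strictly less than $1$ via a geometric tail bound. By construction, $(\gamma_H)_m \in (0,1)$ is characterized by
\[
\sum_{i=m}^{\infty} (\gamma_H)_m^{\,H_i} \ = \ 1.
\]
For $m_1 < m_2$ and $x \in (0,1)$, we have
\[
t_{H_{m_1}}(x) - t_{H_{m_2}}(x) \ = \ \sum_{i=m_1}^{m_2 - 1} x^{H_i} \ > \ 0,
\]
so plugging $x = (\gamma_H)_{m_1}$ into $t_{H_{m_2}}$ yields a negative value. Since $t_{H_{m_2}}$ is strictly increasing on $(0,1)$ (by term-by-term differentiation, already noted in the excerpt) with unique root $(\gamma_H)_{m_2}$, this forces $(\gamma_H)_{m_1} < (\gamma_H)_{m_2}$. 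Thus $\{(\gamma_H)_m\}$ is strictly increasing and bounded above by $1$, hence converges to some limit $L \in (0,1]$.

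To rule out $L < 1$, suppose for contradiction that $L < 1$. Then $(\gamma_H)_m \leq L$ for every $m$, so
\[
1 \ = \ \sum_{i=m}^{\infty} (\gamma_H)_m^{\,H_i} \ \leq \ \sum_{i=m}^{\infty} L^{H_i}.
\]
Because $\{H_i\}$ is a strictly increasing sequence of positive integers, we have $H_i \geq i$, and hence $L^{H_i} \leq L^i$. The right-hand side is therefore bounded by the tail $\sum_{i=m}^{\infty} L^i$ of a convergent geometric series, which tends to $0$ as $m \to \infty$. This contradicts the equality to $1$, so $L = 1$, completing the proof.

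The argument is essentially routine: the monotonicity step and the tail-bound step are both short. The only point requiring a moment of care is the geometric majorization of $\sum L^{H_i}$, which follows immediately from the trivial lower bound $H_i \geq i$ on a strictly increasing integer sequence. No deeper property of the sequence (for instance, the PLRS structure) is needed — the proposition holds for any strictly increasing sequence of positive integers for which the power series \eqref{t_{H_m}def} is well-defined.
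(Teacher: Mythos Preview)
Your proof is correct and follows essentially the same approach as the paper's: first establish strict monotonicity of $\{(\gamma_H)_m\}$ by comparing $t_{H_{m_1}}$ with $t_{H_{m_2}}$, then rule out a limit $L<1$ by observing that the defining equation forces $\sum_{i\geq m} L^{H_i}\geq 1$ for all $m$, contradicting that this tail tends to $0$. The only cosmetic difference is that you make the geometric majorization explicit via $H_i\geq i$, whereas the paper appeals to the earlier comparison with the full geometric series; the substance is identical.
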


\begin{proof}
By the Monotone Convergence Theorem for real numbers, if we show that the bounded sequence is strictly increasing, then it has a limit. To prove that the sequence is strictly increasing, notice that each function $R_m(x)$ is increasing on $(0, 1)$, and $f_{m + 1}(x)$ only differs from $R_m(x)$ in that it has one less power of $x$ in its expansion. Since $x^{B_i}$ is an increasing monomial on $(0, 1)$ for each $i \in \N^+$, necessarily $\ga_{B_{m + 1}} > \ga_{B_m}$. Thus the sequence $\{\ga_{B_m}\}^{\infty}_{m = 1}$ has a limit, which we will call $\ga_B$. Our construction of the sequence assures that $\ga_B \leq 1$, so we assume for sake of contradiction that $\ga_B < 1$.

Since $f_m$ is increasing on $(0, 1)$ and $\ga_B \geq \ga_{B_m}$ for each $m \in \N^+$, 
\begin{equation}\label{etaConvEq1}
    0 \ < \ -1 + \sum^{\infty}_{i = m}\ga_B^{B_i} \ < \ \infty,
\end{equation}
from which it follows that
\begin{equation}\label{etaConvEq2}
    \sum^{\infty}_{i = m}\ga_B^{B_i} \ > \ 1
\end{equation}
for all $m \in \N^+$. However, the series $\sum^{\infty}_{i = m}x^{B_i}$ absolutely converges for all $x \in (0, 1)$ by comparison with the geometric series $\sum^{\infty}_{i = 1}x^i$. That is, $\lim_{m \rightarrow \infty}\sum^{\infty}_{i = m}\ga_B^{B_i} = 0$, posing a contradiction. 
\end{proof}

The sequence of roots $\{\ga_{B_m}\}^{\infty}_{m = 1}$ appears in the results of the next section, where we start proving asymptotic results for compositions that are only comprised of positive integers belonging to a given sequence. Different terms from these sequences of roots will appear depending on how many terms from the beginning of $\{B_j\}^{\infty}_{j = 1}$ we preclude from use in compositions.


\subsection{Asymptotic Results}\label{asympRes}

Now we consider some results on the asymptotics of compositions for a given increasing sequence of positive integers. The starting point of this discussion is \cite[Theorem 7]{Kno}, proven in 2003, but we provide some generalizations and a new direction for what can be done with this type of result. Working with general increasing sequences of positive integers gives us the flexibility to later apply our theory to positive linear recurrence sequences.

Now we state the aforementioned \cite[Theorem 7]{Kno} on the asymptotics of Fibonacci compositions.

\begin{theorem}\label{FibKno} Let $r_2(n)$ denote the number of Fibonacci compositions of $n$ (with any number of summands), and let $R_2(x) := \sum^{\infty}_{i = 2}x^{F_i}$. Then, as $n \rightarrow \infty$, we have
\begin{equation}\label{fib2asymp}
    r_2(n) \ \sim \ \frac{1}{R_2'(\al_2)}\al_2^{-n - 1},
\end{equation}
where $\al_2$ is the unique positive root of $R_2(x) = 1$, namely $\al_2 \approx 0.5276126$, so that $r_2'(\al_2) \approx 3.3749752$. In addition, if $\overline{r_2}(n)$ denotes the mean number of summands in a random Fibonacci composition of $n$, then as $n \rightarrow \infty$, we have
\begin{equation}\label{fibasympPrime}
    \overline{r_2}(n) \ \sim \ \frac{1}{\al_2 R_2'(\al_2)}n \ \approx \ 0.561583n.
\end{equation}
\end{theorem}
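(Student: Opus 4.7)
The plan is to reduce the theorem to singularity analysis of the generating function
\[
C(x) \ := \ \sum_{n \geq 0} r_2(n) x^n \ = \ \sum_{k \geq 0} r_2(x)^k \ = \ \frac{1}{1 - r_2(x)},
\]
where the middle equality reflects that an ordered composition with $k$ Fibonacci parts contributes one factor of $r_2(x)$ per part. From Section \ref{(0, 1)} we already have that $r_2$ is analytic on $B(0, 1)$, strictly increasing on $(0, 1)$, and has $\alpha_2$ as the unique solution to $r_2(x) = 1$ there. Term-by-term differentiation gives $r_2'(\alpha_2) > 0$, so $1 - r_2(x)$ has a simple zero at $\alpha_2$ and $C(x)$ has a simple pole there.

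Next I would verify that $\alpha_2$ is the unique singularity of $C$ on the circle $\{|x| = \alpha_2\}$. Since the Fibonacci exponents appearing in $r_2$ include $F_2 = 1$ and $F_3 = 2$, their greatest common divisor is $1$; the classical aperiodicity estimate for power series with non-negative coefficients then gives $|r_2(x)| < r_2(\alpha_2) = 1$ whenever $|x| = \alpha_2$ and $x \neq \alpha_2$. Consequently $\alpha_2$ is the dominant singularity, and $C$ admits a meromorphic extension in a small disk about it, so a Flajolet--Sedgewick meromorphic-transfer argument applied to the local expansion $C(x) \sim \frac{1}{r_2'(\alpha_2)(\alpha_2 - x)}$ produces the claimed asymptotic $r_2(n) \sim r_2'(\alpha_2)^{-1}\, \alpha_2^{-n-1}$ by comparison with the geometric-series coefficient formula.

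For the second assertion I would mark summands with an auxiliary variable $u$, forming the bivariate generating function
\[
C(x, u) \ = \ \sum_{n, k \geq 0} c_{n, k}\, x^n u^k \ = \ \frac{1}{1 - u\, r_2(x)},
\]
where $c_{n, k}$ counts Fibonacci compositions of $n$ with exactly $k$ parts. Differentiating at $u = 1$ produces $\partial_u C(x, 1) = r_2(x)/(1 - r_2(x))^2$, whose $n$th coefficient is the total number of summands taken across all Fibonacci compositions of $n$. This auxiliary function has a double pole at $\alpha_2$, and the same transfer theorem yields an asymptotic of order $(n+1)\, \alpha_2^{-n-2} / r_2'(\alpha_2)^2$. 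Dividing by the count asymptotic from the previous paragraph and simplifying recovers $\bar{r_2}(n) \sim n / (\alpha_2 r_2'(\alpha_2))$.

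The main obstacle is the rigorous justification of the singularity analysis, specifically two items: the aperiodicity argument that excludes other singularities on the circle $\{|x| = \alpha_2\}$, and the transfer from a local expansion of $C(x)$ to an asymptotic for $[x^n] C(x)$. Once meromorphic extendability past the dominant singularity is in hand the transfer is standard; I would simply invoke the relevant theorem from the Flajolet--Sedgewick framework rather than redo contour estimates by hand.
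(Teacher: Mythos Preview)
Your proposal is correct and follows essentially the same approach as the paper: form $C(x)=1/(1-r_2(x))$, identify the simple pole at $\alpha_2$, extract the asymptotic from the local expansion, then repeat with the bivariate generating function $1/(1-u\,r_2(x))$ and its $u$-derivative for the mean. If anything you are more careful than the paper, which passes directly from the simple pole to the coefficient asymptotic without explicitly checking that $\alpha_2$ is the unique singularity on its circle; your aperiodicity remark (via $\gcd(F_2,F_3)=1$) fills that gap.
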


Using the same method, and replacing $R_2(x)$ with $T_{B_m}(x)$, we obtain the following new theorem.

\begin{theorem}\label{FibAsympRemoveHead}
As $n \rightarrow \infty$ we have the asymptotic behavior
\begin{equation}\label{fib2asympjGen}
    {t_B}_m(n) \ \sim \ \frac{1}{{T_B}_m'(\ga_{B_m})}\ga_{B_m}^{-n - 1},
\end{equation}
where $\ga_{B_m}$ is the unique positive root of the equation $T_{B_m}(x) = 1$. In addition, if $\overline{t_B}_m(n)$ denotes the mean number of summands in a random Fibonacci decomposition of $n$, then as $n \rightarrow \infty$, we have 
\begin{equation}\label{fib2asympPrimejGen}
    \overline{{t_B}_m}(n) \ \sim \ \frac{1}{\ga_{B_m} {T_B}_m'(\ga_{B_m})}n.
\end{equation}
\end{theorem}

\begin{proof} We recall that the power series \eqref{THMGF} is analytic in $(0, 1)$. Calculating its first derivative yields 
\begin{equation}\label{eTheoremEq3}
  T_{B_m}'(x) \ = \ \sum^{\infty}_{i = m}B_ix^{B_i - 1}.
\end{equation} 
If we restrict $T_{B_m}$ to the real interval $[0, 1)$, we see that
\begin{equation}\label{eTheoremEq4}
    \lim_{s \rightarrow 1^+}T_{B_m}(s) \ = \ +\infty,
\end{equation}
so the Intermediate Value Theorem assures there exists some $\ga_{B_m} \in (0, 1)$ for which $T_{B_m}(\ga_{B_m}) = 1$. Moreover, $T_{B_m}'(x) > 0$ for all $x \in (0, 1)$ due to formula \eqref{eTheoremEq3}, so this choice of $\ga_H$ is unique. The function
\begin{equation}\label{eTheoremEq5}
    U_{H_m}(x) \ := \ \frac{1}{1 - T_{B_m}(x)}
\end{equation}
has a pole at $x = \ga_{B_m}$. Denote $g_k(n)$ as the number of ways to write $n$ as a composition of exactly $k$ summands from the list $\{B_j\}^{\infty}_{j = m}$. Then we recall that the local expansion of $F_k(x) := \sum^{\infty}_{i = 0}g_k(i)x^i$ around this pole (denoted $\al$) can be given as 
\begin{equation}\label{eTheoremEq6}
    F_k(x) \ \sim \ \frac{1}{\al T_{B_m}'(\ga_{B_m})} \cdot \frac{1}{1 - \frac{x}{\ga_{B_m}}},
\end{equation}
and then
\begin{equation}\label{eTheoremEq7}
    g_k(n) \ = \ [x^n]U_{H_m}(x) \ \sim \ \frac{1}{\ga_{B_m} T_{B_m}'(\ga_{B_m})}\ga_{B_m}^{-n} \ = \ \frac{1}{T_{B_m}'(\ga_{B_m})}\ga_{B_m}^{-n - 1}
\end{equation}
as $n \rightarrow \infty$, which proves \eqref{fib2asympjGen}.

It remains to prove \eqref{fib2asympPrimejGen}. To do so, we begin by defining $u$ as the number of summands in a given composition of $n$, which has the associated bi-variate generating function
\begin{equation}\label{eTheoremEq8}
    H_m(x, u) \ = \ \frac{1}{1 - uT_{B_m}(x)}.
\end{equation}
Since $\overline{t_{B_m}}(n)$ can be thought of the first [discrete] moment of $t_{B_m}(n)$, we can use the approach of differentiating identities to calculate $\overline{t_{B_m}}(n)$:
\begin{equation}\label{eTheoremEq9}
    \overline{t_{B_m}}(n) \ = \ \frac{1}{t_{B_m}(n)}[x^n]\frac{\pa}{\pa u}H_m(x, u) \ = \ \frac{1}{t_{B_m}(n)}\cdot\frac{T_{B_m}(x)}{(1 - T_{B_m}(x))^2}.
\end{equation}
With this in mind, we define
\begin{equation}\label{eTheoremEq10}
    j_{B_m}(x) \ := \ \frac{T_{B_m}(x)}{(1 - T_{B_m}(x))^2},
\end{equation}
and observe that $T_{B_m}$ has a pole of order $2$ at $x = \ga_{B_m}$. Thus the first term in the local expansion of $j_{B_m}(x)$ around $x = \ga_{B_m}$ is of order
\begin{equation}\label{eTheoremEq11}
    j_{B_m}(x) \ \sim \ \left(\frac{1}{T_{B_m}'(\al)}\right)^2\frac{1}{(x - \ga_{B_m})^2}.
\end{equation}
Extracting the coefficients from \eqref{eTheoremEq11} gives the asymptotic expansion
\begin{equation}\label{eTheoremEq12}
    [x^n]j_{B_m}(x) \ \sim \ \left(\frac{n}{\ga_{B_m}^2(T_{B_m}'(\ga_{B_m}))^2}\right)\ga_{B_m}^{-n}
\end{equation}
as $n \rightarrow \infty$. Plugging this into \eqref{eTheoremEq9} gives the formula \eqref{fib2asympPrimejGen}.
\end{proof}

The case of $m := 3$ for the Fibonacci numbers in Theorem \ref{FibAsympRemoveHead} is of special interest since the frequency of $1$s in compositions of Fibonacci numbers was studied in \cite{Kno}. In other words, it gives us a sense of how much flexibility we lose in building Fibonacci compositions when we can no longer use $1$ as a summand, when compared to Theorem \ref{FibKno}. 

\begin{theorem}[Theorem 8 in \cite{Kno}]\label{FibCompDensity1s}
The proportion of summands equal to $1$ in a random Fibonacci composition of $n$ tends to $\al_2 \approx 0.5276125$ as $n \rightarrow \infty$.
\end{theorem}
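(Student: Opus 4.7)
The plan is to mirror the bivariate generating function argument used in Theorem \ref{FibKno}, but with the auxiliary variable $u$ now marking only those summands that equal $F_2 = 1$ rather than every summand. Concretely, I would set
\[
G_2(x, u) \ = \ \frac{1}{1 - \left(r_2(x) + (u-1)x\right)},
\]
so that $[x^n u^k] G_2(x,u)$ counts Fibonacci compositions of $n$ with exactly $k$ summands equal to $1$. Differentiating in $u$ and evaluating at $u = 1$ gives
\[
\frac{\pa}{\pa u}G_2(x, u)\Big|_{u=1} \ = \ \frac{x}{(1 - r_2(x))^2},
\]
whose $n$-th coefficient is the total number of $1$-summands aggregated over all Fibonacci compositions of $n$. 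Dividing this by $g_2(n)$ yields the expected number of $1$-summands in a random composition of $n$, and dividing by the expected total number of summands gives the proportion in question.

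For the coefficient extraction, I would carry out the same local expansion near the dominant singularity $\al_2$ that was used to establish \eqref{fibasympPrime}. Since $1 - r_2(x)$ has a simple zero at $\al_2$, the function $x/(1-r_2(x))^2$ has a pole of order $2$ there, and a direct Laurent expansion yields
\[
\frac{x}{(1-r_2(x))^2} \ \sim \ \frac{1}{\al_2 \, r_2'(\al_2)^2} \cdot \frac{1}{(1 - x/\al_2)^2},
\]
from which $[x^n]\frac{x}{(1-r_2(x))^2} \sim n \, \al_2^{-n-1} \, r_2'(\al_2)^{-2}$. Combining with $g_2(n) \sim \al_2^{-n-1}/r_2'(\al_2)$ gives an expected number of $1$-summands asymptotic to $n/r_2'(\al_2)$. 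Comparing this against the expected total number of summands $n/(\al_2 r_2'(\al_2))$ from Theorem \ref{FibKno}, the factors of $n$ and $r_2'(\al_2)$ cancel cleanly and the proportion converges to $\al_2$, as claimed.

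I expect the main obstacle to lie not in the algebra but in justifying the coefficient extraction from the differentiated bivariate generating function. One must check that $\al_2$ remains the unique dominant singularity of $\partial_u G_2(x, u)|_{u=1}$ on the closed disk of radius $\al_2$, and that the local expansion above controls the true coefficient behavior up to the desired precision; this is standard meromorphic singularity analysis and goes through verbatim once the singularity structure is identified. As a sanity check, the answer $\al_2$ also arises from the probabilistic picture in which the weights $\al_2^{F_i}$ for $i \geq 2$ form a probability distribution on $\{F_2, F_3, \dots\}$ (since $r_2(\al_2) = 1$), and each summand's limiting probability of taking the value $1 = F_2$ is simply $\al_2^{F_2} = \al_2$.
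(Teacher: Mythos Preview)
Your argument is correct and is precisely the natural extension of the bivariate generating function method that the paper reproduces for Theorem~\ref{FibKno}. Note, however, that the paper itself does \emph{not} prove Theorem~\ref{FibCompDensity1s}: immediately after stating it the author writes ``We will not reproduce this theorem's proof here,'' citing \cite{Kno} instead. So there is no in-paper proof to compare against. Your approach---marking only the $F_2=1$ summands with $u$, differentiating at $u=1$, and extracting coefficients via the order-$2$ pole at $\al_2$---is the standard one and is almost certainly what \cite{Kno} does, given that it runs verbatim parallel to the proof of Theorem~7 that the paper does reproduce. The probabilistic sanity check at the end (each summand is a $1$ with limiting probability $\al_2^{F_2}=\al_2$ under the measure with weights $\al_2^{F_i}$) is a nice touch and gives an independent heuristic confirmation.
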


The related application of Theorem \ref{FibAsympRemoveHead} is as follows.

\begin{example}\label{FibAsympRemoveHeadj=3}
Let $r_3(n)$ denote the number of Fibonacci compositions of $n$ (with any number of summands) where the summand $1$ is not allowed, and let $R_3(x) := \sum^{\infty}_{i = 3}x^{F_i}$. As $n \rightarrow \infty$ we have
\begin{equation}\label{fib2asympr_3}
    r_3(n) \ \sim \ \frac{1}{R_3'(\al_3)}\al_3^{-n - 1},
\end{equation}
where $\al_3$ is the unique positive root of $R_3(x) = 1$, namely $\al_3 \approx 0.6855205$, so that $R_3'(\al_3) \approx 4.6054074$. In addition, if $\overline{r}_3(n)$ denotes the mean number of summands in a random Fibonacci composition of $n$ not using $1$ as a summand, then as $n \rightarrow \infty$, we have
\begin{equation}\label{fib2asympPrime}
    \overline{r_3}(n) \ \sim \ \frac{1}{\al_3 R_3'(\al_3)}n \ \approx \ 0.3167463n.
\end{equation}
\end{example}

Now we extend beyond the case $m = 3$, by providing some data concerning the sequence $\{\al_m\}^{\infty}_{m = 2}$; among other things this supports Proposition \ref{etaConv}. Table \ref{FibRemoveFirstData} keeps track of the values of the roots that appear between $0$ and $1$ as we change the value of $m$ (i.e., it keeps track of $\al_m$). As argued in the proof of Theorem \ref{FibAsympRemoveHead}, each equation in this family will have a unique root in the interval $(0, 1)$; moreover, the exact asymptotic behaviors of the number of compositions and the average number of summands in said compositions rely on the values of these roots. Finally, the last column of the table tracks the coefficient of $n$ in the asymptotic result for the mean number of summands in a random Fibonacci decomposition of $n$ where $\{F_1, F_2, \dots, F_{m - 1}\}$ are excluded. The first row in the table, for $m = 2$, represents what happens when every distinct positive Fibonacci number is allowed.

\begin{table}[h!]
\centering
\begin{tabular}{||c c c c||} 
 \hline
 Value of $m$ & $\substack{\text{Smallest Fibonacci}\\ \text{number allowed}}$ & Value of $\al_m$ & $\frac{1}{\al_mR_m'(\al_m)}$ \\ [0.5ex] 
 \hline\hline
 2 & 1& 0.5276126 & 0.5615856\\ 
 3 & 2 & 0.6855205& 0.3167463\\
 4 &3 & 0.7889604& 0.2018247 \\
 5 & 5& 0.8645115 &0.1232169 \\
 6 & 8& 0.9137569 & 0.0765024\\ 
  7 & 13& 0.9458315&0.0471977 \\ 
 8 & 21 & 0.9661554 &0.0291894\\
 9 &34 & 0.9789482& 0.0180354\\
 10 &55 & 0.9869358 & 0.0111476\\
 11 & 89& 0.9919058& 0.0068893\\ 
 12 & 144& 0.9949897 & 0.0042579\\ 
 13 & 233 & 0.9969005 & 0.0026315\\
 14 & 377& 0.9980833& 0.0016264 \\
 15 & 610 & 0.9988150& 0.0010051\\
 16 &987 & 0.9992674& 0.0006212\\ 
 17 & 1597& 0.9995472&0.0003839 \\ 
 18 & 2584 & 0.9997201& 0.0002373\\
 19 & 4181& 0.9998270& 0.0001466 \\
 20 & 6765 & 0.9998931& 0.0000906\\
 [1ex] 
 \hline
\end{tabular}
\caption{In this table we list, for different values of $m$, the approximated values of the roots $\al_m \in (0, 1)$ of $R_m(x) = 1$, and of the ratios $\frac{\overline{r_m}(n)}{n}$ when $n \rightarrow \infty$. The GitHub repository \url{https://github.com/jmsiktar/FibBins} contains the Python code used to produce the data in this table.}
\label{FibRemoveFirstData}
\end{table}


\section{Comparing Fibonacci Compositions to General PLRS Compositions}\label{PLRS}
We look to compare the Fibonacci numbers to other positive linear recurrence sequences. 


\subsection{Relative Asymptotics of PLRS Compositions}\label{relativeasymptotic}

Since every PLRS is a strictly increasing sequence of positive integers, Proposition \ref{etaConv} will apply to all such sequences. The central factor of understanding the relative asymptotics of compositions coming from Fibonacci numbers compared to some other PLRS is the asymptotic growth rate of such a sequence.

As a motivation for the forthcoming results, we showcase the relationship between the Fibonacci numbers and the Golden Ratio in an asymptotic framework. We may write
\begin{equation}\label{FibSetupEq1}
    \phi \ = \ \lim_{m \rightarrow \infty}\frac{F_{m + 1}}{F_m} \ = \ \lim_{m \rightarrow \infty}\frac{F_m + F_{m - 1}}{F_m} \ = \ 1 + \lim_{m \rightarrow \infty}\frac{F_{m - 1}}{F_m} \ = \ 1 + \frac{1}{\phi}.
\end{equation}
This is an equation that can be solved for $\phi$ to obtain the classical value of the Golden Ratio, $\frac{1 + \sqrt{5}}{2}$. Notice this is the only positive, real root of \eqref{FibSetupEq1}. This procedure of generating a polynomial where a root is the sequence's growth rate can be generalized to all PLRS, and we will need this tool in the present section. Consider a generic PLRS of order $L$ with coefficients $\{c_1, \dots, c_L\}$. Since $c_1, c_L > 0$, repeating \eqref{FibSetupEq1} in this general case gives
\begin{eqnarray}\label{GenSetupEq1}
\begin{aligned}
    &\be_H \ =: \ \lim_{m \rightarrow \infty}\frac{H_{m + 1}}{H_m} \ = \ \lim_{m \rightarrow \infty}\frac{\sum^{L}_{i = 1}c_iH_{m + 1 - i}}{H_m} \ = \\
    &c_1 + \sum^{L}_{i = 2}\frac{c_iH_{m + 1 - i}}{H_m} \ = \ \sum^{L}_{i = 1}\frac{c_i}{\be_H^{i - 1}},
    \end{aligned}
\end{eqnarray}
and we can multiply the resulting equation by $\be_H^{L - 1}$ to obtain the polynomial
\begin{equation}\label{GenSetupEq2}
    \be_H^L - \sum^{L}_{i = 1}c_i\be_H^{L - i}\ = \ 0.
\end{equation}
By Descartes' Rule of Signs, the polynomial \eqref{GenSetupEq2} will have exactly one positive root. Moreover, since $c_L > 0$, we can not have $0$ as a root of \eqref{GenSetupEq2}.

\begin{definition}[Asymptotic Growth Rate]\label{asympGrowthRate}
For a given PLRS $\{H_j\}^{\infty}_{j = 1}$, we define $\be_H := \lim_{m \rightarrow \infty}\frac{H_{m + 1}}{H_m}$ as the \textit{asymptotic growth rate} of $\{H_j\}^{\infty}_{j = 1}$. 
\end{definition}
The derivation of \eqref{GenSetupEq1} and \eqref{GenSetupEq2} assures that $\be_H$ is uniquely defined and strictly positive, for any PLRS.

\begin{definition}[Growth Polynomial]\label{growthPolyDef}
For a given PLRS $\{H_j\}^{\infty}_{j = 1}$, we call the left-hand side of \eqref{GenSetupEq2} the \textit{growth polynomial} associated with $\{H_j\}^{\infty}_{j = 1}$.
\end{definition}

Another simple and very useful property is the following: if $\{B_j\}^{\infty}_{j = 1}$ and $\{\til{H_m}\}^{\infty}_{m = 1}$ are two PLRS of the same order $L$ with coefficients $\{c_1, c_2, \dots, c_L\}$ and $\{\til{c_1}, \til{c_2}, \dots, \til{c_L}\}$ where $\til{c_i} \geq c_i$ for all $i \in \{1, 2, \dots, L\}$, then $\til{H}_j \geq H_j$ for all $j \in \N^+$; crucial to this property taking place is that all PLRS have initial term $1$. We will use this in the proof of Proposition \ref{FibPLRSFixM}, which relates compositions using only Fibonacci numbers to compositions using only numbers belonging to some other PLRS. First, however, we prove that if sufficiently many initial values of a PLRS are given, then the coefficients can be uniquely determined. This result will be needed for some of the technical lemmas in this section.

\begin{proposition}[Uniqueness from Initial Values]\label{uniquenessDetPLRS}
For any given $L + 1$ initial values of a PLRS (including $H_1 = 1$), there is at most one PLRS of length $L$ that satisfies those values.
\end{proposition}

\begin{proof}
Since $H_1 = 1$ by definition, the PLRS recurrence gives a system of $L$ equations used to determine $H_2, H_3, \dots, H_{L + 1}$. The resulting system will be lower triangular (if the equations are written starting with $H_2 = c_1H_1 + 1$ and ending with $H_{L + 1} = c_1H_L + \dots + c_LH_1$). In fact, the main diagonal will not have any zeros since all entries of a PLRS are strictly positive, so the coefficient matrix is invertible. However, the solution may have negative or fractional coefficients, so there is no guarantee that the solution corresponds to a PLRS.
\end{proof}

Now, we will fix $m \in \N^+$, and this number indicates how many of the first numbers in the sequences are not allowed to be summands in compositions. Recall from the statement of Theorem \ref{FibAsympRemoveHead} that $T_{B_m}(n)$ denotes the number of compositions of $n$ using the terms $\{H_m, H_{m + 1}, H_{m + 2}, \dots\}$ from any increasing sequence $\{B_j\}^{\infty}_{j = 1} \subset \N^+$. Meanwhile, $r_m(n)$ denotes this same quantity specifically for the Fibonacci numbers. Our next proposition tells us when the number of Fibonacci compositions asymptotically grows faster than compositions with summands from some other PLRS, and vice-versa.

\begin{proposition}[Fibonacci vs. PLRS Rates]\label{FibPLRSFixM} Let $\{H_j\}^{\infty}_{j = 1}$ be a PLRS according to Definition \ref{PLRSDef}, with asymptotic growth rate $\be_H$. If $m$ is sufficiently large, then the limit $\lim_{n \rightarrow \infty}\frac{t_{H_m}(n)}{r_m(n)}$ has the following properties:
\begin{enumerate}
    \item It equals $0$ if $\be_H > \phi$;
    \item It equals $+\infty$ if $\be_H < \phi$.
\end{enumerate}
\end{proposition}

\begin{proof}
Fix $m \geq 1$. If we apply Theorem \ref{FibAsympRemoveHead} for both the PLRS $\{H_j\}^{\infty}_{j = 1}$ and the Fibonacci numbers, we see that
\begin{equation}\label{FibPLRSFixMEq1}
    \lim_{n \rightarrow \infty}\frac{t_{H_m}(n)}{r_m(n)} \ = \ \lim_{n \rightarrow \infty}\frac{\frac{1}{T'_{H_m}(\ga_{B_m})} \ga^{-n - 1}_{H_m} }{\frac{1}{R'_m(\al_m)}\al_m^{-n - 1}} \ = \ \frac{R_m'(\al_m)}{T'_{H_m}(\ga_{B_m})}\lim_{n \rightarrow \infty}\left(\frac{\al_m}{\ga_{B_m}} \right)^{n + 1}.
\end{equation}
In other words, the relative behavior of $\al_m$ and $\ga_{B_m}$ dictate the asymptotic behavior of this quotient. This relative behavior is in turn determined by whether $\be_H < \phi$, $\be_H > \phi$, or $\be_H = \phi$. 

If $\be_H > \phi$, then the powers appearing in the power series $T_{B_m}(x)$ (except possibly for the first few) are larger than those appearing in the power series $R_m(x)$. Then $T_{B_m}(x) < R_m(x)$ for $x \in (0, 1)$, and both power series are increasing on this interval, so $\ga_{B_m} > \al_m$. The case of $\be_H < \phi$ is similar. 
\end{proof}

There are two limitations of this result. The first is that some PLRS are such that the first few values of the sequence are smaller than the Fibonacci numbers, but then $\{H_j\}^{\infty}_{j = 1}$ becomes larger; an example of this is the PLRS of order $6$ with coefficients $c_1 = 1$, $c_2 = c_3 = c_4 = c_5 = 0$, and $c_6 = 20$. This is the reason why we need $m$ to be sufficiently large, and leave the issue of small $m$ open for PLRS where this behavior takes place. On the other hand, if either of the following is true, we may set $m = 1$ in Proposition \ref{FibPLRSFixM}:
\begin{enumerate}
    \item $H_m \geq F_m$ for all $m \in \N^+$;
    \item $H_m \leq F_m$ for all $m \in \N^+$.
\end{enumerate}
The second caveat is that we need information about the value of $\be_H$ to determine which case of Proposition \ref{FibPLRSFixM} a given PLRS falls under. For a given PLRS, let $m_0$ denote the smallest index for which $F_{m_0} \neq H_{m_0}$. We will now study some particular PLRS for which we can obtain the needed information and apply Proposition \ref{FibPLRSFixM} readily. 

Moreover, it is worth noting that if $\be_H = \phi$ then we need information on which sequence has larger terms at the beginning of the sequence; otherwise the method used to analyze the other cases is inconclusive. Thus, one may ask if this is a case that ever occurs. The next proposition and example are intended to address this question.

\begin{proposition}\label{PLRSNOOrder2_3}
The only PLRS of order $2$ with asymptotic growth rate equal to $\phi$ is the Fibonacci numbers $\{F_j\}^{\infty}_{j = 1}$. Furthermore, there is no PLRS of order $3$ with asymptotic growth rate equal to $\phi$.
\end{proposition}

\begin{proof}
First we consider the case of PLRS of order $2$. Recall that the asymptotic growth rate of a PLRS is the unique positive root of the sequence's growth polynomial. Based on Definition \ref{growthPolyDef}, the coefficients of a growth polynomial are all rational. If the PLRS is of order $2$, this means the other root of the growth polynomial must be $\frac{1 - \sqrt{5}}{2}$. Furthermore, since every growth polynomial has leading coefficient $1$, the polynomial in question must be $x^2 - x - 1$, which is the growth polynomial for $\{F_j\}^{\infty}_{j = 1}$.

Now we consider the case of a PLRS of order $3$. Assume for sake of contradiction that there exists a PLRS of order $3$, where the positive root of the associated growth polynomial is $\phi$. Just as in the case of an order $2$ PLRS, we must have $\frac{1 - \sqrt{5}}{2}$ as another root of the polynomial. Now, there will be one more root of this growth polynomial, which we will call $a$. Since the coefficients of a growth polynomial are all real-valued, and the other roots are real-valued, $a \in \R$. In particular, we cannot have a complex conjugate pair of roots because a cubic polynomial has at most three distinct roots, and two of them are determined to be real. Furthermore, $\phi$ is the only positive root, and if the coefficients of the PLRS are $c_1$, $c_2$, and $c_3$, then $c_3 \neq 0$, so $0$ cannot be a root of the growth polynomial. Thus we have $a < 0$, and we observe that our growth polynomial must take the following form:
\begin{eqnarray}\label{PLRSNOOrder2_3Eq1}
\begin{aligned}
    (x^2 - x - 1)(x - a) \ &= \ 0 \Rightarrow \\
    x^3 + (-a - 1)x^2 + (a - 1)x + a \ &= \ 0.
\end{aligned}
\end{eqnarray}
In general for a PLRS, the first and last coefficients must be negative integers, and all other coefficients must be non-positive integers (recall Definition \ref{PLRSDef}). If $-a - 1 < 0$, then $a$ is a non-negative integer; on the other hand, formula \eqref{GenSetupEq2} implies that $0 < c_3 = -a$, which means that $a$ must be a strictly negative integer. We have reached a contradiction, thus no PLRS of order $3$ satisfying our conditions can exist.
\end{proof}

Meanwhile, in spite of Proposition \ref{PLRSNOOrder2_3}, there are other PLRS besides the Fibonacci numbers that yield an asymptotic growth rate of $\phi$. 

\begin{example}\label{PLRSOrder4CEX}
The simplest example of a PLRS of order greater than $2$ with asymptotic growth rate of $\phi$ is the PLRS of order $4$ with $c_1 = 1$, $c_2 = 0$, $c_3 = 1$, and $c_4 = 1$. This PLRS has the growth polynomial $\be_H^4 - \be_H^3 - \be_H - 1$. Its roots are $\phi$, $\frac{1 - \sqrt{5}}{2}$, $i$, and $-i$.
\end{example}

\begin{remark}\label{PLRSRootBd}
There are numerous results giving lower and upper bounds on the modulus of roots of a polynomial, based on the polynomial's coefficients. Such estimates appear in sources such as \cite{Cau, Sun}, but any such bound can give a sufficient condition for the growth rate of a particular PLRS being smaller or greater than $\phi$. In particular, many results are only written as an upper bound. However, if $a$ is an upper bound for the modulus of any root of the equation
\begin{equation}\label{growthMultInverse}
    -\sum^{L}_{i = 1}c_ix^i + 1 \ = \ 0,
\end{equation}
then $\frac{1}{a}$ is a lower bound for the modulus of any root of the equation
\begin{equation}\label{growthMultInverse2}
    x^L - \sum^{L}_{i = 1}c_ix^{L - i} \ = \ 0,
\end{equation}
because these two polynomials have roots which are multiplicative inverses. In other words, if $r_1, \dots, r_L$ are the roots of \eqref{growthMultInverse}, then the roots of \eqref{growthMultInverse2} are $\frac{1}{r_1}, \dots, \frac{1}{r_L}$. Notice also that \eqref{growthMultInverse2} is just \eqref{GenSetupEq2} with a different name for the variable.
\end{remark}


\subsection{Case $m_0 \geq L + 2$}\label{m0Big}
Treating the previous subsection as the backbone of our asymptotic analysis, specifically Proposition \ref{FibPLRSFixM}, we consider separately, in the present and next two subsections, the cases $m_0 \geq L + 2$; $m_0 = L + 1$; and $m_0 \leq L$. Each case yields different scenarios for the possible relationships between the Fibonacci sequence $\{F_m\}^{\infty}_{m = 1}$ and an otherwise arbitrary PLRS $\{H_m\}^{\infty}_{m = 1}$.

This identity is needed to fully analyze the $m_0 = L + 2$ case when $L$ is odd (the case of $L$ being even is more straightforward).
\begin{lemma}\label{FibIdentityI}
If $L \geq 3$ is odd and $k \geq L + 1$, we have
\begin{equation}\label{FibIdentityIEq}
    2F_{k - L + 2} + \sum^{\frac{L - 1}{2} - 1}_{i = 1}F_{k - 2i + 1} \ = \ F_k + F_{k - L}.
\end{equation}
\end{lemma}

\begin{proof}
We will proceed by induction on $L$; the base case is to prove that
\begin{equation}\label{FibIdentityIEq1}
    2F_{k - 1} \ = \ F_k + F_{k - 3}
\end{equation}
for all $k \geq 4$, but this follows immediately from the Fibonacci recurrence. Now we may assume for some arbitrary $L \geq 3$ odd that \eqref{FibIdentityIEq} holds for all $k \geq L + 1$, and to complete the induction we must show that 
\begin{equation}\label{FibIdentityIEq2}
    2F_{k - L} + \sum^{\frac{L + 1}{2} - 1}_{i = 1}F_{k - 2i 1} \ = \ F_k + F_{k - L + 2}
\end{equation}
holds for all $k \geq L + 3$. If we rearrange the terms in \eqref{FibIdentityIEq} and \eqref{FibIdentityIEq2}, we observe that it suffices to prove that
\begin{equation}\label{FibIdentityIEq3}
2F_{k - L} + \sum^{\frac{L + 1}{2} - 1}_{i = 1}F_{k - 2i + 1} - F_{k - L - 2} \ = \ 2F_{k - L + 2} + \sum^{\frac{L - 1}{2} - 1}_{i = 1}F_{k - 2i + 1} - F_{k - L}.
\end{equation}
Subtracting away terms that appear on both sides of \eqref{FibIdentityIEq3} gives
\begin{equation}\label{FibIdentityIEq4}
    3F_{k - L} - F_{k - L - 2} \ = \ F_{k - L + 2},
\end{equation}
which is an immediate consequence of the Fibonacci recurrence. This completes the induction.
\end{proof}

\begin{lemma}\label{L+2=m0}
If $m_0 = L + 2$, then the following behaviors hold:
\begin{itemize}
    \item If $L$ is even then $F_{m + 1} = H_m$ for all $m \in \N^+$
    \item If $L$ is odd then $F_{m + 1} < H_m$ for all $m \geq m_0$.
\end{itemize}
\end{lemma}

\begin{proof}
Consider first the case where $L$ is even. Then $F_{m + 1} = H_m$ for all $m \in \{1, 2, \dots, L + 1\}$. One can easily check that a possible set of coefficients for the PLRS $\{H_j\}^{\infty}_{j = 1}$ is $c_i = 1$ for all odd $i$, and $c_L = 1$ (with all other coefficients being zero). Due to Proposition \ref{uniquenessDetPLRS}, this is the only possible set of coefficients for the PLRS. Now we can check that $F_{m + 1} = H_m$ for all $m \in \N^+$ via strong induction. Our construction of the sequence $\{H_j\}^{\infty}_{j = 1}$ automatically assures that $L + 1$ base cases hold. Let $k \geq L + 1$ and assume that $F_{m + 1} = H_m$ for all $m \leq k$, and we will show that $F_{k + 2} = H_{k + 1}$. This is demonstrated by the coefficients of $\{H_j\}^{\infty}_{j = 1}$ and the recurrence for the Fibonacci numbers:
\begin{eqnarray}\label{L+2=m0EqA1}
\begin{aligned}
H_{k + 1} \ &= \ H_{k - L + 1} + \sum^{\frac{L}{2} - 1}_{i = 0}H_{k - 2i} 
 \ = \ F_{k - L + 2} + \sum^{\frac{L}{2} - 1}_{i = 0}F_{k - 2i + 1} \\
\ &= \ F_{k - L + 4} + \sum^{\frac{L}{2} - 1}_{i = 1}F_{k - 2i + 1} 
\ = \ \dots 
\ = \ F_{k + 2},
\end{aligned}
\end{eqnarray}
completing the proof when $L$ is even.

Now consider the case where $L$ is odd. We will need a slightly weaker version of Lemma \ref{FibIdentityI}; precisely, we will use the inequality
\begin{equation}\label{L+2=m0EqB1}
    2F_{k - L + 2} + \sum^{\frac{L - 1}{2} - 1}_{i = 1}F_{k - 2i + 1} \ \geq \ F_k.
\end{equation}
By construction, $H_m = F_{m + 1}$ for all $m \in \{1, 2, \dots, L + 1\}$. One can easily check that a possible set of coefficients for the PLRS $\{H_j\}^{\infty}_{j = 1}$ is $c_i = 1$ for all odd $i$ except $L$, and $c_L = 2$ (with all other coefficients being zero). Again invoking Proposition \ref{uniquenessDetPLRS}, this is the only possible set of coefficients for the PLRS.

Let $k \geq L + 2$ and assume that $H_m > F_{m + 1}$ for all $m \in \{L + 2, L + 3, \dots, k\}$. Then to complete the proof, we must show that $H_{k + 1} > F_{k + 2}$. Due to our choice of coefficients, we have the recurrence
\begin{equation}\label{L+2=m0EqB2}
    H_{k + 1} \ = \ 2H_{k - L + 1} + \sum^{\frac{L - 1}{2} - 1}_{i = 0}H_{k - 2i}.
\end{equation}
Due to the inductive hypothesis, we have that
\begin{equation}\label{L+2=m0EqB3}
    H_{k + 1} \ > \ 2F_{k - L + 2} + \sum^{\frac{L - 1}{2} - 1}_{i = 0}F_{k - 2i + 1},
\end{equation}
and then using the Fibonacci recurrence and \eqref{L+2=m0EqB1}, we proceed as follows:
\begin{eqnarray}\label{L+2=m0EqB4}
\begin{aligned}
H_{k + 1} \ &> \ 2F_{k - L + 2} + F_k + \sum^{\frac{L - 1}{2} - 1}_{i = 1}F_{k - 2i + 1} \\
\ &> \ F_{k + 1} + F_k
\ = \ F_{k + 2}.
\end{aligned}
\end{eqnarray}
This completes the proof when $L$ is odd.
\end{proof}

As a corollary of the proof of Lemma \ref{L+2=m0}, we see that if $m_0 \geq L + 2$, then either $m_0 = L + 2$ (if $L$ is odd) or $H_m = F_m$ for all $m \in \N^+$ (if $L$ is even). In addition, this lemma shows that there are PLRS of order other than $2$ that generate the Fibonacci numbers, an observation that coincides with the result of Example \ref{PLRSOrder4CEX}.

We may now turn our attention to the scenarios where $m_0 \leq L + 1$.


\subsection{The $m_0 = L + 1$ case}\label{m0=L+1}

The purpose of this lemma is to understand a specific PLRS that serves as an ``edge case" for analyzing asymptotic growth when $L$ is odd.

\begin{lemma}\label{SpecialCOdd3Lem}
Let $k \in \N^+$ and let $\{H_j\}^{\infty}_{j = 1}$ be the PLRS of order $2k + 1$ with $c_i = 1$ for all odd $i \leq 2k - 1$, $c_i = 0$ for all even $i \leq 2k$, and $c_{2k + 1} \geq 3$. Then this sequence has the following properties in comparison to the Fibonacci numbers:
\begin{itemize}
    \item $H_j = F_{j + 1}$ for all $j \leq 2k + 1$;
    \item $H_j > F_{j + 1}$ for all $j > 2k + 1$.
\end{itemize}
\end{lemma}

\begin{proof}
The first claim of this proof can be easily checked by using the definition of a PLRS and the given coefficients for $\{H_j\}^{\infty}_{j = 1}$. The second claim will follow from a strong induction argument. The base case is where $j = 2k + 2$, so we show that $H_{2k + 2} - F_{2k + 3} > 0$. We use the definition of the recurrence and the equalities $H_j = F_{j + 1}$ for all $1 \leq j \leq 2k + 1$ to calculate
\begin{multline}\label{SpecialCOdd3LemEq1}
    H_{2k + 2} - F_{2k + 3} \ = \ \sum^{k}_{i = 1}H_{2i + 1} + 3H_1 - F_{2k + 3} \ \geq \ \sum^{k - 1}_{i = 1}H_{2i + 1} + 3 - F_{2k + 1} \ = \ \dots \\
    \ = \ H_3 + 3 - F_5 \ = \ 3 + 3 - 5 \ = \ 1 \ > \ 0.
\end{multline}
Now we may proceed to the inductive step. Supposing that $H_j > F_{j + 1}$ for all $2k + 2 \leq j \leq m$, we want to show that $H_{m + 1} > F_{m + 2}$. Once again, we may use the recurrence defining $\{H_j\}^{\infty}_{j = 1}$, along with the inductive hypothesis, as follows:
\begin{multline}\label{SpecialCOdd3LemEq2}
    H_{m + 1} - F_{m + 2} \ = \ \sum^{k}_{i = 1}H_{m - 2k + 2i} + c_{2k + 1}H_{m - 2k} - F_{m + 1} - F_m \ > \ \sum^{k - 1}_{i = 1}H_{m - 2k + 2i} + 3H_{m - 2k} - F_m \ \\ \geq \ \sum^{k - 2}_{i = 1}H_{m - 2k + 2i} + 3H_{m - 2k} - F_{m - 2} \ \geq \ \dots \ \geq \ 3H_{m - 2k} - F_{m - 2k + 1} - F_{m - 2k} \ \geq \ H_{m - 2k} \ > \ 0,
\end{multline}
which completes the proof.
\end{proof}

Now we are able to fully dissect the case $m_0 = L + 1$, when $L$ is either odd or even.

\begin{lemma}\label{L+1=m0}
If $m_0 = L + 1$, then the following behaviors take place:
\begin{itemize}
    \item If $L$ is even and we choose $c_L \geq 2$, then $H_m > F_{m + 1}$ for all $m \geq m_0$.
    \item If $L$ is odd and we choose $c_L = 1$, then $H_m < F_{m + 1}$ for all $m \geq m_0$
    \item If $L$ is odd and we choose $c_L \geq 3$ then $H_m > F_{m + 1}$ for all $m \geq m_0$
\end{itemize}
\end{lemma}

\begin{proof}
First, consider what happens if $L$ is even. From the condition $m_0 = L + 1$, we must have that $H_k = F_{k + 1}$ for all $k \leq L$. By using the definition of the recurrence for the PLRS $\{H_j\}^{\infty}_{j = 1}$, it becomes clear that we must have $c_i = 1$ for all $i \leq L - 1$ that are odd, and $c_i = 0$ for all $i \leq L - 1$ that are even. Since the PLRS has order $L$, we must have $c_L \geq 1$. If $c_L = 1$, then $H_m = F_{m + 1}$, which contradicts the assumption that $m_0 = L + 1$. Thus we must have that $c_L \geq 2$. Considering the event where $c_L = 2$, we can prove that $H_m > F_{m + 1}$ for all $m \geq m_0 = L + 1$ using strong induction. One can easily check the base case, that $H_{m_0} > F_{m_0 + 1}$. Now, assume that $H_j > F_{j + 1}$ holds for all $L + 1 \leq j \leq k$, and we want to show that $H_{k + 1} > F_{k + 2}$. This proof follows from utilizing the recurrence defining $\{H_j\}^{\infty}_{j = 1}$, and the fact that $H_j \geq F_{j + 1}$ for all $j \leq k$:
\begin{multline}\label{L+1=m0EqA1}
    H_{k + 1} - F_{k + 2} \ = \ \sum^{\frac{L}{2}}_{i = 1}H_{k - L + 2i} + 2H_{k - L + 1} - F_{k + 1} - F_{k + 2} \\ > \ \sum^{\frac{L}{2} - 1}_{i = 1}H_{k - L + 2i} + 2H_{k - L + 1} - F_{k - 1} - F_{k - 2} \ \geq \ \dots \ 
    \ \geq \ 2H_{k - L + 1} - F_{k - L + 2} - F_{k - L + 1} \ \geq \ 0,
\end{multline}
which completes the proof in this case. Among other things, this shows that if $L$ is even and $m_0 = L + 1$, the PLRS $\{H_j\}^{\infty}_{j = 1}$ will have asymptotic growth at least equal to the one of the Fibonacci numbers.

Now we consider what happens if $L$ is odd. Similarly to the proof of Lemma \ref{L+2=m0}, the values of most of the coefficients will be fixed since we need $F_m = H_m$ for all $m \leq L$. We require that $c_i = 1$ for all odd $i$ less than $L$, and $c_i = 0$ for all even $i$; the value of $c_L$ must be positive since the PLRS has order $L$, but its value is flexible. If $c_L = 2$ then we end up in the scenario described in Lemma \ref{L+2=m0}, so now we consider other possible values of $c_L$. If $c_L = 1$ then the recurrence is
\begin{equation}\label{L+1=m0EqB1}
    H_k \ = \ \sum^{\frac{L - 1}{2}}_{i = 0}H_{k - 2i - 1},
\end{equation}
and one can easily check that $H_{m_0} < F_{m_0 + 1}$. We can then prove that $H_m < F_{m + 1}$ for all $m > m_0$ via strong induction. To do so, we assume that $H_j < F_{j + 1}$ holds for some $L + 1 \leq j \leq k$, and prove that $H_{k + 1} < F_{k + 2}$. We apply the recursion \eqref{L+1=m0EqB1} replacing $k$ with $k + 1$, and then use the inductive hypothesis to obtain
\begin{equation}\label{L+1=m0EqB2}
    H_{k + 1} \ = \ \sum^{\frac{L - 1}{2}}_{i = 0}H_{k - 2i} \ < \ F_{k + 1} + \sum^{\frac{L - 1}{2}}_{i = 1}H_{k - 2i} \ \leq \ F_{k + 1} + \sum^{\frac{L - 1}{2}}_{i = 1}F_{k - 2i + 1}.
\end{equation}
How we proceed from here changes slightly depending on the parity of $k$. If $k$ is even then we can complete the induction proof via the following argument:
\begin{eqnarray}\label{L+1=m0EqB3}
\begin{aligned}
    &F_{k + 1} + \sum^{\frac{L - 1}{2}}_{i = 1}F_{k - 2i + 1} \ \leq \ F_{k + 1} + \sum^{\frac{k - 2}{2}}_{i = 1}F_{k - 2i + 1} + F_2 \\
    \ &= \ F_{k + 1} + \sum^{\frac{k - 4}{2}}_{i = 2}F_{k - 2i + 1} + F_2 
    \ = \ \dots 
    \ = \ F_{k + 1} + F_k \ = \ F_{k + 2}.
\end{aligned}
\end{eqnarray}
Meanwhile, if $k$ is odd, then $k \geq L + 1$ implies that $k \geq L + 2$, and we instead proceed this way:
\begin{eqnarray}\label{L+1=m0EqB4}
\begin{aligned}
    &F_{k + 1} + \sum^{\frac{L - 1}{2}}_{i = 1}F_{k - 2i + 1} \ < \ F_{k + 1} + \sum^{\frac{k - 3}{2}}_{i = 1}F_{k - 2i + 1} + F_3 \\
    \ &= \ F_{k + 1} + \sum^{\frac{k - 3}{2}}_{i = 2}F_{k - 2i + 1} + F_5 
    \ = \ \dots 
    \ = \ F_{k + 1} + F_k \ = \ F_{k + 2}.
\end{aligned}
\end{eqnarray}
On the other hand, if $c_L \geq 3$, Lemma \ref{SpecialCOdd3Lem} identifies the specific behavior that this sequence takes: namely, we will have $H_k = F_{k + 1}$ for all $k \leq L + 1$, and then $H_k > F_{k + 1}$ for all $k \geq L + 2$, which is exactly what we wanted to show. 
\end{proof}


\subsection{The $m_0 \leq L$ case}\label{m0Small}
The case $m_0 \leq L$ introduces substantial difficulties not present in the other cases, and we are mostly leaving this case open in this paper. The main issue is that Proposition \ref{uniquenessDetPLRS} no longer applies. Indeed, there is no longer enough information to uniquely determine all coefficients of the PLRS. It turns out the coefficients that are not uniquely determined by the first few values of the PLRS can have their sizes manipulated to change the sequence's growth rate. For instance, if $\{H_j\}^{\infty}_{j = 1}$ is a PLRS of order $L$ for which the coefficients $c_1, c_2, \dots, c_{L - 1}$ are fixed, then there exists a possible value of $c_L \in \N^+$ for which $\lim_{m \rightarrow \infty}\frac{H_{m + 1}}{H_m} > \phi$. This follows from the fact that the constant term of \eqref{GenSetupEq2} is $-c_L$. As $c_L$ becomes larger, the unique positive root of the growth polynomial can be made arbitrarily large.
 

\section{Concluding remarks}\label{end}

We conclude this paper by discussing some potential directions of further, future investigations. The first direction is to consider asymptotics of compositions where only a sub-sequence of the Fibonacci numbers is allowed, for instance only the even-indexed Fibonacci numbers. This could serve as a more combinatorial companion to the recent paper \cite{Gil}, which provides conditions on which decompositions of natural numbers into sub-sequences of Fibonacci numbers exist and are unique. Such a generalization is not necessarily encompassed by our theory of PLRS, because the first term may no longer be equal to $1$.

Another possibility would be to extend our PLRS theory to recurrences where the coefficients need not to be integers. This would give us a richer class of sequences to compare to each other in our asymptotic framework. Notably, the theory in section \ref{PLRS} does not specifically use the property that PLRS are integer sequences, but only that certain coefficients of the recurrence defining the PLRS were positive. The trade-off, however, is that we would no longer be studying integer composition problems.

Finally, Section \ref{PLRS} of this paper left partially open the characterization of finding which PLRS sequences have greater asymptotic growth than the Fibonacci numbers, and which have smaller such growth. While some results exist to bound the modulus of roots of growth polynomials from above and below, we have not identified a complete characterization of which growth polynomials fall into which category.


\vspace{2 cm}

 

\end{document}